\newtheorem{theorem}{Theorem}[section]
\newtheorem{lemma}[theorem]{Lemma}
\def\3{\subset }
\def\4{\subseteq }
\def\<{\left<}
\def\>{\right>}
\def\bit{\begin{itemize}}
\def\eit{\end{itemize}}
\def\3{\subset }
\def\4{\subseteq }
\def\0{\leqno}
\def\barr{\begin{array}}
\def\earr{\end{array}}
\def\Z{{\rlap{$\kern2pt{\rm Z}$}{\rm Z}\,}}
\title{\bf A remark on the number of automorphisms of finite abelian groups}
\author{Marius T\u arn\u auceanu}
\date{November 1, 2024}
\begin{document}

\maketitle

\begin{abstract}
Let $Ab_0$ be the class of finite abelian groups and consider the function $f:Ab_0\longrightarrow(0,\infty)$ given by $f(G)=\frac{|{\rm Aut}(G)|}{|G|}$\,, where ${\rm Aut}(G)$ is the automorphism group of a finite abelian group $G$. In this short note, we prove that the image of $f$ is a dense set in $[0,\infty)$.
\end{abstract}

\noindent{\bf MSC (2020):} Primary 20D45; Secondary 20D60, 20K01.

\noindent{\bf Key words:} number of automorphisms, abelian groups.

\section{Introduction}

A well-known question in group theory (see e.g. Problem 12.77 of \cite{5}) asks whether it is true that $|G|$ divides $|{\rm Aut}(G)|$ for every nonabelian finite $p$-group $G$. This was answered in negative in \cite{3}, where for each prime $p$ there was constructed a family $(G_n)_{n\in\mathbb{N}}$ of finite $p$-groups such that $|{\rm Aut}(G_n)|/|G_n|$ tends to zero as $n$ tends to infinity. By considering the function
\begin{equation}
f(G)=\frac{|{\rm Aut}(G)|}{|G|}\nonumber
\end{equation}for all finite groups $G$, the above result means that zero is an accumulation point of the image of $f$. Note that a similar result holds for the function
\begin{equation}
f'(G)=\frac{|{\rm Aut}(G)|}{\varphi(|G|)}\,,\nonumber
\end{equation}where $\varphi$ denotes the Euler's totient function (see e.g. \cite{1,2}). These constitute the starting point for our work.

Our main result shows that all nonnegative real numbers are accumulation points of the image of $f$, even if we restrict this function only to abelian groups.

\begin{theorem}
The set 
\begin{equation}
{\rm Im}(f)=\{f(G)\mid G\in Ab_0\}\nonumber 
\end{equation}is dense in $[0,\infty)$.
\end{theorem}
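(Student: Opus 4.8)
The plan is to exploit two features of the problem: that $f$ is multiplicative on direct products of abelian groups of coprime order, and that we have at our disposal two very different families of building blocks---cyclic groups, which furnish a dense supply of ``small'' correction factors in $(0,1]$, and elementary abelian $2$-groups, whose $f$-values grow without bound. First I would record that if $\gcd(|G|,|H|)=1$ then ${\rm Aut}(G\times H)={\rm Aut}(G)\times{\rm Aut}(H)$, so that $f(G\times H)=f(G)f(H)$; this lets me assemble a target value as a product of contributions coming from disjoint sets of primes.

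Next I would treat the fine-tuning step. For a cyclic group one has $f(\mathbb{Z}/m\mathbb{Z})=\varphi(m)/m=\prod_{p\mid m}(1-1/p)$, so, taking $m$ odd and squarefree, $-\log f(\mathbb{Z}/m\mathbb{Z})=\sum_{p\mid m}\bigl(-\log(1-1/p)\bigr)$ is a finite subset sum of the numbers $a_p=-\log(1-1/p)$ over odd primes $p$. Since $a_p\sim 1/p\to 0$ while $\sum_p a_p=\infty$, the standard lemma on subset sums of a divergent series of positive terms tending to $0$ shows these subset sums are dense in $[0,\infty)$. Hence $\{\varphi(m)/m:m\ \text{odd squarefree}\}$ is dense in $(0,1]$.

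For the coarse step I would use $G_n=(\mathbb{Z}/2\mathbb{Z})^n$, for which ${\rm Aut}(G_n)=\mathrm{GL}_n(\mathbb{F}_2)$ and hence $L_n:=f(G_n)=2^{-n}\prod_{i=0}^{n-1}(2^n-2^i)$ is of order $2^{n^2-n}$ and in particular tends to $\infty$. Given any target $t>0$ and any $\varepsilon>0$, I would first choose $n$ with $L_n>t$, then use the previous paragraph to pick an odd squarefree $m$ with $\varphi(m)/m$ within $\varepsilon/L_n$ of $t/L_n\in(0,1)$. Because $m$ is odd, $\gcd(2^n,m)=1$, so $f\bigl((\mathbb{Z}/2\mathbb{Z})^n\times\mathbb{Z}/m\mathbb{Z}\bigr)=L_n\cdot\varphi(m)/m$ lies within $\varepsilon$ of $t$. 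Finally, the value $t=0$ is an accumulation point of ${\rm Im}(f)$ already through cyclic groups, since $f(\mathbb{Z}/(p_1\cdots p_k)\mathbb{Z})\to 0$ as $k\to\infty$; combining the two cases gives density in all of $[0,\infty)$.

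The main obstacle I anticipate is the fine-tuning density, i.e. making precise that the correction factors $\varphi(m)/m$ are dense in $(0,1]$: this is where the arithmetic input (divergence of $\sum 1/p$ together with $1/p\to 0$) enters, and it is what allows the discrete ``magnitudes'' $L_n$ to be interpolated into a genuinely dense image rather than a sparse set of large values. Everything else---multiplicativity, the order of $\mathrm{GL}_n(\mathbb{F}_2)$, and the unboundedness of $L_n$---is routine, so the real work is in combining a single large scaling factor with an arbitrarily fine multiplicative adjustment while keeping the relevant orders coprime.
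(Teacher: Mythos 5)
Your proposal is correct and takes essentially the same route as the paper: multiplicativity of $f$, the subset-sum lemma applied to $-\log(1-1/p)$ over (odd) primes to make the cyclic-group values $\varphi(m)/m$ dense in $(0,1]$, and the unbounded values $f(C_2^n)=|\mathrm{GL}_n(\mathbb{F}_2)|/2^n$ as the coarse scaling factor, combined via coprimality. The only cosmetic difference is that you package the fine-tuning step as odd squarefree cyclic groups $\mathbb{Z}/m\mathbb{Z}$ instead of the paper's direct products $\prod_{i\in I}C_{p_i}$, which are the same groups.
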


The proof of Theorem 1.1 follows the same steps as the proofs of Theorem 1.1 in \cite{4}. It is based on the next lemma which is a
consequence of Proposition outlined on p. 863 of \cite{6}.

\begin{lemma}
Let $(x_n)_{n\geq 1}$ be a sequence of positive real numbers such that $\lim_{n\rightarrow\infty}x_n=0$ and $\sum_{n=1}^{\infty}x_n$ is divergent. Then the set containing the sums of all finite subsequences of $(x_n)_{n\geq 1}$ is dense in $[0,\infty)$.
\end{lemma}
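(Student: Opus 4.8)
The plan is to prove density directly from the definition: given an arbitrary target $t\ge 0$ and a tolerance $\varepsilon>0$, I would construct a finite subset $S\subseteq\{1,2,\dots\}$ whose associated subsum $\sum_{n\in S}x_n$ lies within $\varepsilon$ of $t$. The two hypotheses play complementary roles, and recognizing this is the conceptual core of the argument: the condition $x_n\to 0$ controls the \emph{overshoot} of a greedy accumulation, while the divergence of $\sum_n x_n$ guarantees that the accumulation can actually \emph{reach} any prescribed height.

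Concretely, first I would use $x_n\to 0$ to fix an index $N$ with $x_n<\varepsilon$ for all $n\ge N$. Discarding the first $N-1$ terms does not affect divergence, so the tail series $\sum_{n\ge N}x_n$ is still divergent; since all its terms are positive, its partial sums $s_j=\sum_{n=N}^{j}x_n$ (with the convention $s_{N-1}=0$) form a strictly increasing sequence tending to $+\infty$. The greedy step then comes next. Because $s_j\to\infty$ and $s_{N-1}=0\le t$, there is a smallest index $k$ with $s_k\ge t$; if $t=0$ the empty block already works, so assume $k\ge N$. Minimality gives $s_{k-1}<t$, and therefore
\[
t\le s_k=s_{k-1}+x_k<t+x_k<t+\varepsilon,
\]
so the finite subsum corresponding to the consecutive block $S=\{N,N+1,\dots,k\}$ satisfies $0\le \sum_{n\in S}x_n-t<\varepsilon$. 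As $t$ and $\varepsilon$ were arbitrary, the set of finite subsums meets every interval $(t-\varepsilon,t+\varepsilon)\cap[0,\infty)$, which is exactly density in $[0,\infty)$.

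I expect no serious obstacle here; the argument is a clean one-pass greedy selection, and it is worth noting that it produces an \emph{interval} of indices, so it in fact realizes values arbitrarily close to $t$ using subsequences of a very restricted form. The only points requiring care are (i) checking that passing to a tail preserves divergence, which is immediate since finitely many positive terms are removed, and (ii) bounding the overshoot by the size of the single last term added, which is precisely where the hypothesis $x_n\to 0$ is essential: without it, the first partial sum to exceed $t$ could jump an uncontrolled distance past $t$, and density would fail (as, e.g., for $x_n\equiv 1$). The boundary case $t=0$ is handled either by the empty block or by the observation that $x_n\to 0$ already places arbitrarily small single terms near $0$.
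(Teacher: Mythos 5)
Your proof is correct, but it takes a different route from the paper: the paper does not prove this lemma at all, instead deriving it as a consequence of a Proposition stated on p.~863 of Nitecki's article \emph{Cantorvals and subsum sets of null sequences} (reference [6]), which describes the subsum set of a null sequence and in particular implies that for a divergent series of positive null terms the closure of the finite subsums is all of $[0,\infty)$. Your argument is a self-contained, elementary greedy construction: fix $N$ with $x_n<\varepsilon$ for $n\ge N$, use divergence of the tail to find the first partial sum $s_k=\sum_{n=N}^{k}x_n$ with $s_k\ge t$, and bound the overshoot by the last term, giving $t\le s_k<t+x_k<t+\varepsilon$. This is airtight (the minimality of $k$ together with $k\ge N$ is exactly what controls the overshoot, and you handle $t=0$ separately), and it actually buys more than the citation does: it shows that sums over \emph{consecutive blocks} of indices in a tail already suffice, and it is fully constructive. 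What the paper's approach buys instead is brevity and a connection to a stronger structural theory — Nitecki's result characterizes the entire subsum set (including sums of infinite subsequences), of which the density statement is a simple corollary. As a minor stylistic point, where you invoke ``the empty block'' for $t=0$, it is safer to rely on your alternative observation that single terms $x_n$ approach $0$, since the statement's phrase ``finite subsequences'' may be read as excluding the empty one; your proof already covers this.
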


\noindent It also uses the fact that the function $f$ is multiplicative, that is 
\begin{equation}
f(G_1\times G_2)=f(G_1)f(G_2),\nonumber 
\end{equation}for any finite groups $G_1$, $G_2$ of coprime orders.
\bigskip

Finally, we formulate a natural open problem related to the above theorem.

\bigskip\noindent{\bf Open problem.} Is it true that for every $a\in [0,\infty)\cap\mathbb{Q}$ there is a finite (abelian) group $G$ such that $f(G)=a$?

\section{Proofs of the main results}

First of all, we prove an auxiliary result.

\begin{lemma}
The set ${\rm Im}(f)\cap [0,1]$ is dense in $[0,1]$.
\end{lemma}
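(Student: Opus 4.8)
The plan is to exploit the multiplicativity of $f$ together with Lemma 1.3 by passing to logarithms. Since every value $f(G)$ I intend to produce will lie in $(0,1]$, I would set $x=-\log f(G)\geq 0$, so that a product of $f$-values over factors of pairwise coprime order becomes a sum of the corresponding $x$'s, and density of these sums in $[0,\infty)$ translates, via the continuous strictly decreasing bijection $s\mapsto e^{-s}$ from $[0,\infty)$ onto $(0,1]$, into density of the products in $(0,1]$.

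Concretely, let $p_1<p_2<\cdots$ be the sequence of all primes and take $G_i=\mathbb{Z}_{p_i}$, so that $f(G_i)=\varphi(p_i)/p_i=(p_i-1)/p_i\in(0,1)$. Put $x_i=-\log f(G_i)=-\log\!\big(1-\tfrac{1}{p_i}\big)$. I would then verify the two hypotheses of Lemma 1.3: first, $x_i\to 0$ because $p_i\to\infty$; second, $\sum_i x_i$ diverges, which follows from the elementary estimate $-\log(1-u)\geq u$ (giving $x_i\geq 1/p_i$) together with the classical fact that $\sum_i 1/p_i$ diverges.

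With these verified, Lemma 1.3 shows that the set of sums $\sum_{i\in S}x_i$, taken over all finite subsets $S\subset\mathbb{N}$, is dense in $[0,\infty)$. For each such $S$ the group $G_S=\prod_{i\in S}\mathbb{Z}_{p_i}$ is a finite abelian group whose direct factors have pairwise coprime orders, so multiplicativity of $f$ gives $f(G_S)=\prod_{i\in S}f(G_i)=\exp\!\big(-\sum_{i\in S}x_i\big)$. Applying the homeomorphism $s\mapsto e^{-s}$ then shows that $\{f(G_S):S\text{ finite}\}$ is dense in $(0,1]$, and hence in $[0,1]$: the closure of a subset dense in $(0,1]$ contains $\overline{(0,1]}=[0,1]$, and concretely large sums $\sum_{i\in S}x_i$ force $f(G_S)$ arbitrarily close to $0$. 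Since each $f(G_S)\in\mathrm{Im}(f)\cap[0,1]$, this proves the lemma.

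The only genuinely non-routine ingredient is the divergence of $\sum_i 1/p_i$, which secures the second hypothesis of Lemma 1.3; everything else is a formal transfer between the multiplicative and the additive pictures. I would emphasize that one must feed Lemma 1.3 a \emph{divergent} series of $f$-deficiencies, which is exactly why prime-order cyclic groups (with $f=1-1/p$) are the right building blocks: a sparser family of primes could destroy the divergence and leave gaps in $[0,1]$.
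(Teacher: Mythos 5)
Your proof is correct and takes essentially the same route as the paper: cyclic groups of prime order as building blocks, the substitution $x_i=\ln\frac{p_i}{p_i-1}$, divergence of $\sum_i 1/p_i$, the subsum-density lemma (which is Lemma 1.2 in the paper, not 1.3), and transfer back via exponentiation. The only cosmetic difference is that you prove divergence of $\sum_i x_i$ using the inequality $-\ln(1-u)\ge u$, while the paper uses the limit comparison $\lim_{i\to\infty} x_i/(1/p_i)=1$; both are equally valid.
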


\begin{proof}
Let $I$ be a finite subset of $\mathbb{N}$ and $p_i$ be the $i$th prime number. Since $f$ is multiplicative, we have
\begin{equation}
f\left(\prod_{i\in I}C_{p_i}\right)=\prod_{i\in I}f(C_{p_i})=\prod_{i\in I}\frac{p_i-1}{p_i}\nonumber
\end{equation}and so 
\begin{equation}
A=\left\{\prod_{i\in I}\frac{p_i-1}{p_i}\,\bigg|\, I\subset\mathbb{N}, |I|<\infty, p_i=\text{$i$th prime number}\right\}\subseteq {\rm Im}(f)\cap [0,1].\nonumber
\end{equation}Thus it suffices to prove that $A$ is dense in $[0,1]$. 

Consider the sequence $(x_i)_{i\geq 1}\subset (0,\infty)$, where $x_i=\ln(\frac{p_i}{p_i-1})$ for all $i\geq 1$. Clearly, $\lim_{i\rightarrow\infty}x_i=0$. We have
\begin{equation}
\lim_{i\rightarrow\infty}\frac{x_i}{\frac{1}{p_i}}=1.\nonumber
\end{equation}Therefore, since the series $\sum_{i\geq 1}\frac{1}{p_i}$ is divergent, we deduce that the series
$\sum_{i\geq 1}x_i$ is also divergent. So, all hypotheses of Lemma 1.2 are satisfied, implying that
\begin{equation}
\overline{\left\{\sum_{i\in I}x_i \,\bigg|\, I\subset\mathbb{N}^*, |I|<\infty\right\}}=[0,\infty).\nonumber
\end{equation}This means
\begin{equation}
\overline{\left\{\ln\left(\prod_{i\in I}\frac{p_i}{p_i-1}\right) \,\bigg|\, I\subset\mathbb{N}^*, |I|<\infty, p_i=\text{$i$th prime number}\right\}}=[0,\infty)\nonumber
\end{equation}or equivalently
\begin{equation}
\overline{\left\{\prod_{i\in I}\frac{p_i}{p_i-1} \,\bigg|\, I\subset\mathbb{N}^*, |I|<\infty, p_i=\text{$i$th prime number}\right\}}=[1,\infty).\nonumber
\end{equation}Then
\begin{equation}
\overline{\left\{\prod_{i\in I}\frac{p_i-1}{p_i} \,\bigg|\, I\subset\mathbb{N}^*, |I|<\infty, p_i=\text{$i$th prime number}\right\}}=[0,1]\nonumber
\end{equation}and consequently
\begin{equation}
\overline{A}=[0,1],\nonumber
\end{equation}as desired.
\end{proof}

We remark that the conclusion of Lemma 2.1 remains valid if we restrict $f$ to the class $Ab_0'$ of finite abelian groups of odd order, that is ${\rm Im}(f|_{Ab_0'})\cap [0,1]$ is also dense in $[0,1]$. 
\bigskip

We are now able to prove our main result.

\bigskip\noindent{\bf Proof of Theorem 1.1.} We have to prove that for every $a\in [0,\infty)$ and every $\varepsilon>0$ there is $G\in Ab_0$ such that $f(G)\in (a-\varepsilon,a+\varepsilon)$.

If $a\in [0,1]$, this follows from Lemma 2.1. Assume now that $a\in (1,\infty)$. Since ${\rm Aut}(C_2^n)\cong {\rm GL}_n(2)$ has order $\prod_{k=0}^{n-1}(2^n-2^k)$, we have
\begin{equation}
\lim_{n\rightarrow\infty}f(C_2^n)=\lim_{n\rightarrow\infty}\frac{1}{2^n}\prod_{k=0}^{n-1}(2^n-2^k)=\infty\nonumber
\end{equation}and so we can choose a finite elementary abelian $2$-group $G_1$ such that $f(G_1)=b>a$. Then $\frac{a}{b}\in (0,1)$. Let $\varepsilon_1=\frac{\varepsilon}{b}$\,. By the above remark, there is a finite abelian group of odd order $G_2$ with $f(G_2)\in \left(\frac{a}{b}-\varepsilon_1,\frac{a}{b}+\varepsilon_1\right)$. It follows that $G=G_1\times G_2\in Ab_0$ and
\begin{equation}
f(G)=f(G_1)f(G_2)=bf(G_2)\in (a-\varepsilon,a+\varepsilon).\nonumber 
\end{equation}This completes the proof.\qed

\vspace*{3ex}\small

\hfill
\begin{minipage}[t]{5cm}
Marius T\u arn\u auceanu \\
Faculty of  Mathematics \\
``Al.I. Cuza'' University \\
Ia\c si, Romania \\
e-mail: {\tt tarnauc@uaic.ro}
\end{minipage}

\end{document}